\documentclass[12pt, reqno]{amsart}

\usepackage{latexsym}
\usepackage{amssymb}
\usepackage{mathrsfs}
\usepackage{amsmath}
\usepackage{fancybox,color}
\usepackage{enumerate}
\usepackage[latin1]{inputenc}
\usepackage{eurosym}

%
\newcommand{\kom}[1]{}
%
%
\renewcommand{\kom}[1]{{\bf [#1]}}

\addtolength{\parskip}{0.15cm}

 \def\1{\raisebox{2pt}{\rm{$\chi$}}}

\newtheorem{theorem}{Theorem}[section]

\newtheorem{lemma}[theorem]{Lemma}

\newtheorem{remark}[theorem]{Remark}

\newcommand{\N}{{\mathbb N}}
\newcommand{\Z}{{\mathbb Z}}

 \def\1{\raisebox{2pt}{\rm{$\chi$}}}
 

%
\def\vint_#1{\mathchoice%
          {\mathop{\kern 0.2em\vrule width 0.6em height 0.69678ex depth -0.58065ex
                  \kern -0.8em \intop}\nolimits_{\kern -0.4em#1}}%
          {\mathop{\kern 0.1em\vrule width 0.5em height 0.69678ex depth -0.60387ex
                  \kern -0.6em \intop}\nolimits_{#1}}%
          {\mathop{\kern 0.1em\vrule width 0.5em height 0.69678ex
              depth -0.60387ex
                  \kern -0.6em \intop}\nolimits_{#1}}%
          {\mathop{\kern 0.1em\vrule width 0.5em height 0.69678ex depth -0.60387ex
                  \kern -0.6em \intop}\nolimits_{#1}}}
\def\vintslides_#1{\mathchoice%
          {\mathop{\kern 0.1em\vrule width 0.5em height 0.697ex depth -0.581ex
                  \kern -0.6em \intop}\nolimits_{\kern -0.4em#1}}%
          {\mathop{\kern 0.1em\vrule width 0.3em height 0.697ex depth -0.604ex
                  \kern -0.4em \intop}\nolimits_{#1}}%
          {\mathop{\kern 0.1em\vrule width 0.3em height 0.697ex depth -0.604ex
                  \kern -0.4em \intop}\nolimits_{#1}}%
          {\mathop{\kern 0.1em\vrule width 0.3em height 0.697ex depth -0.604ex
                  \kern -0.4em \intop}\nolimits_{#1}}}

\newcommand{\aveint}[2]{\mathchoice%
          {\mathop{\kern 0.2em\vrule width 0.6em height 0.69678ex depth -0.58065ex
                  \kern -0.8em \intop}\nolimits_{\kern -0.45em#1}^{#2}}%
          {\mathop{\kern 0.1em\vrule width 0.5em height 0.69678ex depth -0.60387ex
                  \kern -0.6em \intop}\nolimits_{#1}^{#2}}%
          {\mathop{\kern 0.1em\vrule width 0.5em height 0.69678ex depth -0.60387ex
                  \kern -0.6em \intop}\nolimits_{#1}^{#2}}%
          {\mathop{\kern 0.1em\vrule width 0.5em height 0.69678ex depth -0.60387ex
                  \kern -0.6em \intop}\nolimits_{#1}^{#2}}}

\newcommand{\A}{\mathcal A}

\begin{document}
\title[
A note about the $\ell^p$-improving property of the average operators
]
{A note about the $\ell^p$-improving property of the average operator
}
\author{ Jos{\'e} Madrid}

\date{\today}
\subjclass[2010]{39A12, 26D07, 42B35.}
\keywords{Average operator, Self improving estimates}

\address{Department of Mathematics, UCLA, 520 Portola Plaza, Los Angeles, CA 90095, USA}
\email{jmadrid@math.ucla.edu}

\maketitle
\begin{abstract}In this paper we give a short proof of the $\ell^p$-improving property of the average operator along the square integers and more general quadratic polynomials.  Moreover we obtain a similar result for some higher degree polynomials. We also show an elementary proof of the $\ell^p$-improving property of the average operator along primes.

\end{abstract}

\section{Introduction}


For $f\in \ell^2(\Z)$. Define the average of  $f$ along the polynomial $P$ mapping the integers to the integers, by 
\begin{align}
A^{P}_{N} f(x)&:=\frac{1}{N}\sum_{k=1}^N f(x+P(k)), 
\end{align}
In the case $P(x)=x^d$ we will denote this average by $A^{d}_{N}f$ for $d>2$ and $A_Nf$ for $d=2$. Along this paper $\|f\|_p$ denotes the $\ell^p-$norm of the function $f:\Z\to\mathbb{R}$ and $p'=\frac{p}{p-1}$.\\

We prove that if $ 3/2 < p \leq 2$ and $P(x)=ax^2+bx+c\in \Z[X]$ is a quadratic polynomial with no negative coefficients, then $ A^P_Nf$ satisfies an $ \ell ^{p}$-improving estimate: 
\begin{equation*}
 N ^{-2/p'} \lVert A^P_N f \rVert _{ p'} \lesssim N ^{-2/p} \lVert f\rVert _{{p}},  
\end{equation*}
for every $f:\Z\to\mathbb{R}$. 
The range $(3/2,2]$ is optimal. We are able to extend this result for the higher degree polynomials $P(x)=x^d$, however, so far we can do that only in a (probably) not optimal range. We also obtain a similar improving result for averages along primes through our method.\\ 

The regularity on $\ell^p$-spaces of the averages operator along the square integers  was originally studied by Bourgain in \cite{B}. The $l^{p}$-improving estimates for compact supported functions were recently studied in \cite{HLY}, analyzing a good approximation for the corresponding multiplier, coming from the Hardy Littlewood circle method. The argument in that paper 
use strongly the fact that $P(x)=x^2$ and it does not extend even for the case $p(x)=x^2+x$ as pointed out by the authors, due to some difficulties coming from the minor arcs, in that paper they asked about wether or not we continue having the $\ell^p$-improving estimate for other quadratic polynomials like $x^2+x$, and for higher degree polynomials (see conjecture 6.3 in \cite{HLY}). 
In this paper we give a positive answer to that question for $d=2$ and in the higher degree case for $P(x)=x^d$ in the the range $(2-4/(2+d(2^d+2)),2]$ through a complete different argument, in particular we recover the Theorem 1.1 in \cite{HLY}. The main ingredient is to find a relation between the average operator and the discrete fractional integral operator in order to use the results from \cite{SW} and \cite{Pi}. This method also works for the average operator along primes as discussed below. Variants of this were studied in \cite{K},\cite{Pi2} and \cite{SW2}, for some other interesting related results see \cite{H}, \cite{HKLY}, \cite{K},\cite{KL},\cite{MST}.

\begin{theorem}\label{case $x^2$}
For every $3/2< p\leq 2$ there is a constant $C_p>0$, such that for all $N\in\N$ 
we have that
$$
\|A_{N}f\|_{p'}\leq C_p N^{2/p'-2/p}\|f\|_p,
$$
for every function $f:\Z\to\mathbb{R}$
\end{theorem}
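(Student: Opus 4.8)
The plan is to dominate the average $A_N$ pointwise by the Stein--Wainger discrete fractional integral along the squares and then feed this into the sharp $\ell^p\to\ell^{p'}$ mapping properties of that operator, which are the results quoted from \cite{SW} and \cite{Pi}. As a preliminary reduction, since $|A_N f|\le A_N|f|$ pointwise, it suffices to prove the estimate for nonnegative $f$, so I assume $f\ge 0$ throughout. I also dispose of the endpoint $p=2$ immediately: there the claimed inequality is just $\|A_N f\|_2\lesssim \|f\|_2$, which follows from Minkowski's inequality because $A_N$ is an average of translations.

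For $3/2<p<2$ the main step is the choice of the correct order of integration. Set $\lambda:=3-4/p$, so that $\lambda\in(1/3,1)\subset(0,1)$ on this range, and define
$$I_\lambda f(x):=\sum_{k\ge 1}\frac{f(x+k^2)}{k^{\lambda}}.$$
The elementary observation is that $k^{\lambda}\le N^{\lambda}$ whenever $1\le k\le N$, whence for $f\ge 0$
$$A_N f(x)=\frac{1}{N}\sum_{k=1}^N f(x+k^2)\le \frac{N^{\lambda}}{N}\sum_{k=1}^N\frac{f(x+k^2)}{k^{\lambda}}\le N^{\lambda-1}\,I_\lambda f(x).$$
This is the promised relation between the average operator and the discrete fractional integral operator.

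It then remains to apply the bound $\|I_\lambda f\|_{p'}\lesssim\|f\|_p$. With the target exponent $q=p'$ and $\lambda=3-4/p$, a direct computation gives $1/p'=1/p-(1-\lambda)/2$, which is exactly the critical scaling line for the degree-two fractional integral; moreover the strict inequality $p>3/2$ is equivalent to $\lambda>1-1/p$, which is precisely the condition (quantified by $I_\lambda\delta_0\in\ell^{p'}$, i.e. $\sum_k k^{-\lambda p'}<\infty$) guaranteeing that $(1/p,1/p')$ lies in the good part of the admissible region. Combining the pointwise bound with this $\ell^p\to\ell^{p'}$ estimate yields
$$\|A_N f\|_{p'}\le N^{\lambda-1}\|I_\lambda f\|_{p'}\lesssim N^{\lambda-1}\|f\|_p=N^{2/p'-2/p}\|f\|_p,$$
where the last equality uses $\lambda-1=2-4/p=2/p'-2/p$, giving Theorem \ref{case $x^2$}.

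The main obstacle is the invocation of the fractional integral estimate \emph{on the critical scaling line}, i.e. the endpoint case $1/p'=1/p-(1-\lambda)/2$, rather than in the open region below it; this endpoint is the delicate part and is exactly what the arithmetic (circle-method) input of \cite{SW} and \cite{Pi} supplies. A secondary point worth recording is that this argument also explains the optimality of the range $(3/2,2]$: at $p=3/2$ one has $\lambda=1/3=1-1/p$, so $\lambda p'=1$ and $I_\lambda\delta_0\notin\ell^{p'}$, so the fractional integral bound—and hence the $\ell^p$-improving estimate—fails at the left endpoint.
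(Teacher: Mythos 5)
Your proposal is correct and follows essentially the same route as the paper: the identical choice $\lambda=3-4/p$ (the paper writes it as $\lambda=1-(2/p-2/p')$), the same pointwise domination $A_Nf\le N^{\lambda-1}I_\lambda f$ for $f\ge 0$, and the same appeal to the Stein--Wainger $\ell^p\to\ell^{p'}$ bound for the discrete fractional integral along the squares on the critical line, with the condition $p>3/2$ playing exactly the role of $1/p'<\lambda$ (equivalently $1/p>1-\lambda$) as in the paper. The only differences are cosmetic: you use $f(x+k^2)$ directly instead of the paper's reflection $g(y)=f(-y)$, and you make explicit the reduction to $f\ge 0$ and the $p=2$ case via Minkowski rather than Young.
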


The next results is an extension of this theorem. 

\begin{theorem}\label{general}
Let $P(x)=ax^2+bx+c\in \mathbb{Z}[x]$ be a quadratic polynomial with no negative coefficients. For every $3/2< p\leq 2$ and $N\in\N$, there is a constant $C_p>0$, such that for all $N\in\N$ 
we have that
$$
\|A^P_{N}f\|_{p'}\leq C_p\left(2a+\frac{b}{N}\right)(2aN+b)^{2/p'-2/p}\|f\|_p,
$$
for every function $f:\Z\to\mathbb{R}$.
\end{theorem}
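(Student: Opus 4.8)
The plan is to dominate the averaging operator by a discrete fractional integral and then appeal to the $\ell^p\to\ell^{p'}$ mapping properties of such operators from \cite{SW} and \cite{Pi}. Since the kernel is nonnegative it suffices to treat $f\ge 0$, and by translation invariance of the $\ell^p$-norms we may assume $c=0$, so that $P(k)=ak^2+bk$ with $a\ge 1$ and $b\ge 0$. Set
\[
\lambda:=3-\tfrac4p=1+\tfrac{2}{p'}-\tfrac2p ,
\]
so that $\lambda\in(1/3,1]$ for $p\in(3/2,2]$ and in particular $\lambda>0$. This exponent is dictated by the scaling of the square fractional integral: it is exactly the value for which $I_\lambda h(y):=\sum_{j\ge1} j^{-\lambda}\,h(y+j^2)$ maps $\ell^p$ to $\ell^{p'}$.

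The derivative $P'(k)=2ak+b$ is positive and increasing, so for $1\le k\le N$ we have $(P'(N)/P'(k))^\lambda\ge 1$. Inserting these weights gives, for $f\ge0$,
\[
A^P_Nf(x)=\frac1N\sum_{k=1}^N f(x+P(k))\le \frac{(2aN+b)^\lambda}{N}\,\mathcal{I}f(x),\qquad \mathcal{I}f(x):=\sum_{k=1}^\infty \frac{f(x+P(k))}{(2ak+b)^\lambda}.
\]
Taking $\ell^{p'}$-norms and using $\lambda-1=\tfrac{2}{p'}-\tfrac2p$ together with $\tfrac{(2aN+b)^\lambda}{N}=\big(2a+\tfrac bN\big)(2aN+b)^{2/p'-2/p}$, the theorem reduces to the uniform estimate $\|\mathcal{I}f\|_{p'}\le C_p\|f\|_p$, with $C_p$ independent of the coefficients $a,b$.

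To prove this I would linearize $\mathcal I$ to the square fractional integral. Writing $j=2ak+b=P'(k)$ and using $4aP(k)+b^2=(2ak+b)^2=j^2$, the operator $\mathcal I$ is of convolution type with kernel supported on $\{P(k)\}$ carrying mass $j^{-\lambda}$; the affine bijection $z\mapsto 4az+b^2$ carries this kernel to $\sum_{k\ge1} j^{-\lambda}\delta_{j^2}$, i.e.\ the square fractional integral kernel restricted to the arithmetic progression $j\in b+2a\Z_{\ge1}$. The dilation by $4a$ is handled by splitting $\Z$ into $4a$ residue classes, on each of which the dilated operator restricts to a copy of the standard operator; since $\|f\|_p^p=\sum_{r}\|f_r\|_p^p$ and $p'\ge p$, the contributions reassemble with a constant independent of $a$, while the shift by $b^2$ is absorbed by translation invariance. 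The restriction to an arithmetic progression only discards terms of a nonnegative kernel, so it cannot increase the operator norm. What remains is precisely the bound $\|I_\lambda\|_{\ell^p\to\ell^{p'}}\le C_p$ for the discrete fractional integral along the squares, which is the content of \cite{SW} and \cite{Pi} in the range $\lambda\in(1/3,1)$; the endpoint $p=2$ (where $\lambda=1$) is trivial, since $A_N^P$ is an average of translations and hence a contraction on $\ell^2$, and $2a+b/N\ge 2$. The main obstacle is this last lemma: verifying that the passage from $\mathcal I$ to $I_\lambda$---the dilation, the shift, and above all the arithmetic-progression restriction---can be carried out with a constant that does not depend on $a$ and $b$, so that the coefficient dependence of the final estimate is entirely explicit through the factor $\big(2a+\tfrac bN\big)(2aN+b)^{2/p'-2/p}$.
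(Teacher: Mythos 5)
Your proof is correct and is essentially the paper's own argument: both hinge on the identity $4aP(k)+b^2=(2ak+b)^2$, the $\ell^p$-isometric lifting of $f$ to a function supported on the multiples of $4a$, positivity of the kernel, and the Stein--Wainger bound \cite{SW} for the discrete fractional integral along the squares (with $p=2$ handled separately via Young's inequality). The only difference is the order of steps: you dominate $A^P_N$ by the weighted fractional integral $\mathcal{I}$ and then transport $\mathcal{I}$ to $I_\lambda$ by the dilation and shift, whereas the paper changes variables first, obtaining $A^P_Nf(x)\le\left(2a+\frac{b}{N}\right)A_{2aN+b}g(4a(x+c)-b^2)$ with $g(4am)=f(m)$, and then invokes Theorem \ref{case $x^2$} as a black box.
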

In particular, if $P(x)=x^2$, we have that $a=1, b=0$ and $c=0$, we recover the previous theorem.

For all $d>2$ we define
$$
\widetilde p_d=2-\frac{4}{2+d(2^{d}+2)}.
$$

The next theorem establish the desired result when $p$ is close to $2$, more precisely when $p>\widetilde p_d$. 
\begin{theorem}\label{case $x^d$}
For every\ \ $\widetilde p_{d}< p\leq 2$, there is a constant $C=C(p,d)>0$, such that for all $N\in\N$  
we have that
$$
\|A^{d}_{N}f\|_{p'}\leq C N^{d/p'-d/p}\|f\|_p,
$$
for every function $f:\Z\to\mathbb{R}$.
\end{theorem}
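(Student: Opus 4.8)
The plan is to dominate the average $A^{d}_{N}$ pointwise by a dilate of the Stein--Wainger discrete fractional integral operator along $d$-th powers, and then to quote the known $\ell^{p}\to\ell^{p'}$ mapping properties of the latter. First I would reduce to nonnegative data: since $|A^{d}_{N}f(x)|\le A^{d}_{N}|f|(x)$ pointwise and $\bigl\||f|\bigr\|_{p}=\|f\|_{p}$, it suffices to prove the estimate for $f\ge 0$. For such $f$ introduce, for a parameter $a>0$,
$$\mathcal I_{a}f(x):=\sum_{k=1}^{\infty}\frac{f(x+k^{d})}{k^{a}},$$
which is the discrete fractional integral operator studied in \cite{SW} and \cite{Pi} (the sign convention $x+k^{d}$ versus $x-k^{d}$ is immaterial, the two being related by the reflection $f(y)\mapsto f(-y)$, under which $\|\cdot\|_{p}$ and $\|\cdot\|_{p'}$ are invariant).

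The key elementary observation is the pointwise domination. For $1\le k\le N$ and $a>0$ we have $k^{a}\le N^{a}$, so, using $f\ge 0$,
$$A^{d}_{N}f(x)=\frac1N\sum_{k=1}^{N}f(x+k^{d})\le \frac{N^{a}}{N}\sum_{k=1}^{N}\frac{f(x+k^{d})}{k^{a}}\le N^{a-1}\,\mathcal I_{a}f(x).$$
Now I would choose $a$ so that $\mathcal I_{a}$ maps onto the duality line $q=p'$. The fractional-integration scaling for $\mathcal I_{a}$ along $k^{d}$ gives $\ell^{p}\to\ell^{q}$ boundedness on the line $\tfrac1q=\tfrac1p-\tfrac{1-a}{d}$; imposing $q=p'$ forces
$$a=1+d-\frac{2d}{p},\qquad\text{whence}\qquad a-1=\frac{d}{p'}-\frac{d}{p},$$
which is exactly the exponent appearing in the statement. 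One checks that $\widetilde p_{d}>\tfrac{2d}{d+1}$, so that $a>0$ throughout the range $\widetilde p_{d}<p\le 2$ and the step $k^{a}\le N^{a}$ above is legitimate. Combining the pointwise bound with the $\ell^{p}\to\ell^{p'}$ boundedness of $\mathcal I_{a}$ then yields
$$\|A^{d}_{N}f\|_{p'}\le N^{a-1}\,\|\mathcal I_{a}f\|_{p'}\le C\,N^{a-1}\|f\|_{p}=C\,N^{d/p'-d/p}\|f\|_{p},$$
as claimed.

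Thus the reduction itself is soft, and all of the analytic depth is concentrated in the single quoted input: that $\mathcal I_{a}$ with $a=1+d-2d/p$ is bounded $\ell^{p}\to\ell^{p'}$ precisely for $\widetilde p_{d}<p\le2$. This is the main obstacle, and it is exactly where the restriction to $p>\widetilde p_{d}$ originates. The mapping properties of $\mathcal I_{a}$ in \cite{SW},\cite{Pi} are proved via the Hardy--Littlewood circle method, and the admissible range on the line $q=p'$ is governed by mean-value (Weyl sum) estimates for $\sum_{k\le N}e(k^{d}\xi)$; the appearance of $2^{d}$ in $\widetilde p_{d}=2-\tfrac{4}{2+d(2^{d}+2)}$ reflects the use of the $2^{d}$-th moment Hua-type bound. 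Because this input is not sharp for small $d$, the resulting range is presumably not optimal for $d>2$; for $d=2$ one obtains the optimal interval $(3/2,2]$ by the separate argument of Theorems~\ref{case $x^2$} and \ref{general} rather than through $\widetilde p_{2}$.
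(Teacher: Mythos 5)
Your argument is, in essence, the paper's own argument: for $\widetilde p_d < p < 2$ the paper likewise dominates $A^{d}_{N}f$ pointwise by $N^{\lambda-1}I_{d,\lambda}$ applied to the reflection of $f$ (its $\lambda=1-(d/p-d/p')$ is exactly your $a=1+d-2d/p$), and then quotes Theorem 1 of \cite{Pi}, whose hypothesis $\lambda>\lambda_d:=1-\frac{1}{2^{d-1}+1}$ is precisely equivalent to $p>\widetilde p_d$. On that open range your proof is correct; your explicit reduction to $f\ge 0$ is a detail the paper leaves implicit, and you correctly identify the Hua-type mean value estimate as the source of the $2^{d}$ in $\widetilde p_d$ and of the expected non-optimality for $d>2$.

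The gap is at the endpoint $p=2$, which the theorem includes. There your exponent is $a=1$, and the input you quote --- that $\mathcal{I}_a$ maps $\ell^p\to\ell^{p'}$ ``precisely for $\widetilde p_d<p\le 2$'' --- is false at that endpoint: both \cite{SW} and \cite{Pi} require $\lambda<1$, and $\mathcal{I}_1$ is in fact unbounded on $\ell^2$, since it is convolution with a kernel whose Fourier multiplier $\sum_{k\ge 1}e^{2\pi i k^{d}\xi}/k$ diverges like $\frac{1}{d}\log(1/|\xi|)$ as $\xi\to 0$ (the kernel is not summable, and at $\xi=0$ one faces the harmonic series). So the pointwise domination proves nothing at $p=2$. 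The repair is immediate and is exactly what the paper does: treat $p=2$ separately by Young's inequality, writing $A^{d}_{N}f=K_N*f$ with $K_N=\frac{1}{N}\sum_{k\le N}\delta_{-k^{d}}$ and $\|K_N\|_1=1$, so that $\|A^{d}_{N}f\|_2\le\|f\|_2=N^{d/2-d/2}\|f\|_2$. With that endpoint added (and, strictly speaking, a verification of the side conditions $1/p>1-a$ and $1/p'<a$ required by Pierce's theorem, which hold here because $\widetilde p_d>2-\frac1d$), your proof coincides with the paper's.
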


\begin{remark}
Using Theorem 1 in \cite{Pi} we can go slightly lower than $\widetilde p_d$ for $d>11$. 
\end{remark}

Our final result discuss the improving property of the average operator along primes. This result was recently established in \cite{HKLY} in this paper we present an elementary proof of this fact.

For every $f:\Z\to\mathbb{R}$ we define the average operator along primes to be
$$
\A_{N}f(x)=\frac{1}{N}\sum_{p\leq N}{f(x-p)}\log p
$$ 
where the sum is taken over all primes with size at most $N$.

\begin{theorem}\label{averages along primes}
Let $1<p\leq 2$, then there exists a constant $C_p>0$ such that for all $N\in\N$ we have that
\begin{equation}\label{ineq along primes}
\|\A_Nf\|_{p'}\leq C_pN^{\frac{1}{p'}-\frac{1}{p}}\|f\|_p,
\end{equation}
for every $f:\Z\to\mathbb{R}$.
\end{theorem}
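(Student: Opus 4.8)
The plan is to dominate the prime average $\A_N$, at the single scale $N$, by a discrete fractional integral operator along the primes and then to invoke the known $\ell^p$-improving (discrete Hardy--Littlewood--Sobolev type) bounds for that operator from \cite{SW} and \cite{Pi}. First I would reduce to nonnegative $f$: since the kernel of $\A_N$ is nonnegative we have $|\A_N f|\le \A_N|f|$ pointwise, so it suffices to prove \eqref{ineq along primes} for $f\ge 0$. I would also dispose of the endpoint $p=2$ separately, where the claim reads $\|\A_N f\|_2\lesssim\|f\|_2$; this is immediate from Young's inequality, because $\A_N$ is convolution with a measure of total mass $\frac1N\sum_{p\le N}\log p\lesssim 1$ by Chebyshev's estimate.

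For $1<p<2$ set $\lambda=2-\tfrac2p\in(0,1)$ and define the fractional integral along primes
$$I_\lambda f(x)=\sum_{p}\frac{\log p}{p^{\lambda}}\,f(x-p).$$
The key elementary step is a pointwise domination: for $f\ge 0$, discarding all primes $p>N$ and using $p^{-\lambda}\ge N^{-\lambda}$ on the remaining terms gives
$$I_\lambda f(x)\ge\sum_{p\le N}\frac{\log p}{p^{\lambda}}f(x-p)\ge N^{-\lambda}\sum_{p\le N}(\log p)\,f(x-p)=N^{1-\lambda}\,\A_N f(x),$$
so that $\A_N f(x)\le N^{\lambda-1}I_\lambda f(x)$. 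Granting that $I_\lambda$ maps $\ell^p(\Z)\to\ell^{p'}(\Z)$ boundedly, we then conclude
$$\|\A_N f\|_{p'}\le N^{\lambda-1}\|I_\lambda f\|_{p'}\lesssim N^{\lambda-1}\|f\|_p=N^{1-2/p}\|f\|_p=N^{1/p'-1/p}\|f\|_p,$$
which is exactly \eqref{ineq along primes}. The relation $\tfrac1{p'}=\tfrac1p-(1-\lambda)$ holds precisely when $\lambda=2-\tfrac2p$, and for $1<p<2$ one has $1<p<p'<\infty$, so this is exactly the admissible range for the fractional integral estimate.

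The main obstacle is establishing the $\ell^p\to\ell^{p'}$ bound for the prime operator $I_\lambda$ itself. I expect to obtain it by transferring the classical discrete estimate for the integer fractional integral $\sum_{n\ge1}n^{-\lambda}f(x-n)$ from the integers to the primes: a dyadic decomposition $p\sim 2^j$ together with the Chebyshev bound $\sum_{p\le t}\log p\lesssim t$ shows that, scale by scale, the total mass of the prime-weighted kernel $\frac{\log p}{p^\lambda}$ over $p\sim 2^j$ is comparable to that of the integer kernel $n^{-\lambda}$ over $n\sim 2^j$ (both of order $2^{j(1-\lambda)}$), which reduces the estimate to the results of \cite{SW} and \cite{Pi}. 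As a consistency check one verifies $\lambda>1-\tfrac1p$ for every $p>1$, which guarantees that $I_\lambda f$ is finite pointwise for $f\in\ell^p$ and makes the domination meaningful; the failure of this at $p=2$ (where $\lambda=1$ and $\sum_p \frac{\log p}{p}$ diverges) is precisely why the endpoint must be treated by the trivial $\ell^2$ bound above.
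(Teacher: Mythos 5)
You run into a genuine gap at exactly the step you yourself flag as the main obstacle: the critical bound $\bigl\|\sum_{p}p^{-\lambda}\log p\,f(\cdot-p)\bigr\|_{p'}\lesssim\|f\|_p$ with $\lambda=2-2/p$, for the \emph{untruncated} fractional integral along the primes. This is not a result you can quote from \cite{SW} or \cite{Pi}: both papers treat kernels supported on the $d$-th powers, $\sum_m f(x-m^d)m^{-\lambda}$, and say nothing about prime-supported kernels. Moreover, the transference you sketch --- that the prime kernel and the integer kernel $n^{-\lambda}$ carry comparable mass $\sim 2^{j(1-\lambda)}$ on each dyadic block $n\sim 2^j$, hence the prime operator inherits the integer Hardy--Littlewood--Sobolev bound --- is false as a principle, because $\ell^p$-improving estimates are not determined by the dyadic mass profile of a kernel. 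For instance, $K=\sum_{j\geq 1}2^{j(1-\lambda)}\delta_{2^j}$ has exactly the same dyadic masses as $n^{-\lambda}$, yet $K*\delta_0=K\notin\ell^{p'}$ since $(1-\lambda)p'>0$, so convolution with $K$ does not map $\ell^p$ into $\ell^{p'}$. What the integer-case proof really uses is the spread of the kernel, i.e.\ $n^{-\lambda}\in\ell^{1/\lambda,\infty}$, which feeds into the generalized Young inequality; the prime kernel fails this, since $p_n\sim n\log n$ makes its nonincreasing rearrangement of size $(\log n)^{1-\lambda}n^{-\lambda}$, so any argument based only on masses or rearrangements loses at least a power of $\log$. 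Worse, the bound you want to grant is genuinely stronger than Theorem \ref{averages along primes} itself: even assuming \eqref{ineq along primes}, applying it scale by scale to the pieces $K_j$ of the kernel supported where $p\sim 2^j$ only yields $\bigl\|\sum_{2^j\leq N}K_j*f\bigr\|_{p'}\lesssim(\log N)\|f\|_p$, and the sum over all scales diverges. So your reduction replaces the theorem by an unproved statement that is at least as hard, and which certainly does not follow from the references you cite.

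The paper's proof is built precisely to dodge this obstruction, and the two devices it uses are the ones your proposal discards at its first step. It works only with the truncated operator $J_{\lambda,N}f(x)=\sum_{p\leq N}p^{-\lambda}\log p\,f(x-p)$, for which the trivial kernel-mass estimate $\|J_{\lambda,N}f\|_p\lesssim N^{1-\lambda}\|f\|_p$ is available, and it never claims an unrestricted $(p,p')$ bound for it: Lemma \ref{fraccionario along primes} is a \emph{restricted} weak-type inequality, valid only for levels $\alpha\lesssim N^{-1/q}\|f\|_p$, and is proved by nothing more than Chebyshev's inequality together with that mass bound. The reason the restriction is harmless is the a priori pointwise estimate $\A_Nf(x)\lesssim N^{-1/p'}\|f\|_p$, coming from H\"older and the prime number theorem, which shows that larger levels simply never occur for the single-scale average $\A_Nf$; Marcinkiewicz interpolation in $p$ then upgrades the resulting weak-type bounds to \eqref{ineq along primes}. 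In other words, the single-scale structure of $\A_N$ is exploited quantitatively twice, in the truncation and in the level restriction, whereas your first move --- dominating $\A_N$ by the untruncated operator --- throws that structure away. To repair your argument you would either have to prove the critical prime fractional-integral bound (a substantially harder problem, not in the cited literature), or reinstate the truncation and the restriction on levels, at which point you will have reconstructed the paper's proof.
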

\begin{remark}
In the case $p>2$, if $f$ is supported in $[0,N]$ the inequality \eqref{ineq along primes} follows immediately as a consequence of the $\ell^p$-boundedness of the operator $\A_N$ established by Bourgain in \cite{B2} and H\"older inequality.
\end{remark}


\section{Preliminaries}

 We write $A\lesssim B$ if there exists an absolute constant $C$ such that  $A\leq CB$. If the constant depends on parameter $\lambda$  we denote that with a subscript, such as $A\lesssim _{\lambda}B$. We write $A\sim B$ if both $A\lesssim B$ and $B\lesssim A$. $\delta_k$ denotes the classical Dirac delta function supported at the point $k$, more precisely $\delta_k(k)=1$ and $\delta_{k}(n)=0$ for all $n\neq k$.

Let us focus in the case $P(x)=x^d$.
Let $ d \geq 2 $ be an integer, we start observing that 
an inequality like
\begin{equation}\label{lp improving}
 N ^{-d/p'} \lVert A^{d}_N f \rVert _{ p'} \lesssim N ^{-d/p} \lVert f\rVert _{{p}},  
\end{equation}
for functions $f:\Z\to\mathbb{R}$ supported in $[-N^{d},N^d]$ along the polynomial $P(x)=x^d$ would be optimal in terms of magnitude, in fact, it is enough to consider $f=\chi_{[-N^d,N^d]}$, in this case $\|f\|_p=2^{1/p}N^{d/p}$ while $\|A^{d}_Nf\|_{p'}\geq \|A^{d}_{N}f\|_{\ell^{p'}[-N^d,0]}=N^\frac{d}{p'}$.

Moreover, we observe, that in order to have the $\ell^p$-improving property \eqref{lp improving}, the condition $p\geq 2-\frac{1}{d}=:p_d$ is necessary. In fact, if $f=\delta_0$, then $\|f\|_{p}=1$ and $\|A^{d}_{N}f\|_{p'}=\left(N\frac{1}{N^{p'}}\right)^{\frac{1}{p'}}=\frac{1}{N^{1/p}}$, then \eqref{lp improving} holds for $f=\delta_0$ only if $N^{-d/p'}N^{-1/p}\lesssim N^{-d/p}$ which is equivalent to have $\frac{d-1}{p}\leq \frac{d}{p'}$ {\it{i.e}} $p>p_d$.
In particular if $d=2$ then $p_2=3/2$ is the endpoint, and $p'_2=3$.\\

\subsection{Structure of the paper.} Section 3 contains the proof of our main theorems for averages operators along polynomials. In section 4 we present the proof of our Theorem \ref{averages along primes} for averages operators along primes.

\section{proof of main results}

Now we are in position to start proving our main theorems, a key idea will be to relate the average operator with the discrete fractional integral operator.
For $f:\Z\to\mathbb{R}$ we define 
$$
I_{d,\lambda}f(n)=\sum_{m=1}^{\infty} \frac{f(n-m^d)}{m^\lambda}
$$
where $0<\lambda<1$ and $d\geq 1$ is an integer. We denote by $I_{\lambda}f=I_{2,\lambda}f$. This is the well knonwn discrete fractional integral operator.

\subsection{\it{Case d=2, $P(x)=x^2$}.}
\begin{proof}[Proof of Theorem \ref{case $x^2$}]
We start observing that by Young inequality
\begin{equation}\label{young ineq}
\|A_Nf\|_{2}=\|K_N*f\|_2\leq \|K_N\|_1\|f\|_2=\|f\|_2=N^{2/2-2/2}\|f\|_2
\end{equation}
where $K_N=\frac{1}{N}\sum_{k\leq N}\delta_{-k^2}$. 
Then we can focus in the case when $p\in (3/2,2)$, we define 
$$
\lambda=1-\left(\frac{2}{p}-\frac{2}{p'}\right).
$$
Thus: $0<\lambda<1$ and
$
\frac{1-\lambda}{2}=\frac{1}{p}-\frac{1}{p'}.
$
Moreover, since $p>\frac{3}{2}$ we have that
$
2>\frac{1}{p-1}=\frac{p'}{p}.
$
Then
$$
\frac{1}{p}>\frac{2}{p}-\frac{2}{p'}=1-\lambda\ \ \text{and} \ \frac{1}{p'}<\lambda.
$$
Then, as a consequence of Theorem A and Theorem 1 in \cite{SW} we know that there exsits a constant $C=C(p)$ such that
$$
\| I_{\lambda}h\|_{p'}\leq C\|h\|_{p},
$$
for every $h\in \l^{p}$. Then we observe that
\begin{eqnarray*}
A_{N}f(x)&:=&\frac{1}{N}\sum_{k\leq N}f(x+k^2)\\
&\leq& N^{\lambda-1}\sum_{k\leq N}\frac{f(x+k^2)}{k^{\lambda}}\\
&\leq& N^{\lambda-1}I_{\lambda}g(-x)
\end{eqnarray*}
for all $x\in\Z$, where $g$ is given by $g(y):=f(-y)$ for every $y\in\Z$. Using this we obtain
\begin{eqnarray*}
\|A_{N}f\|_{p'}\leq N^{\lambda-1}\|I_{\lambda}g\|_{p'}\leq CN^{\lambda-1}\|g\|_{p}= CN^{\lambda-1}\|f\|_{p}.
\end{eqnarray*}
Therefore
$$
\|A_Nf\|_{p'}\leq CN^{2/p'-2/p}\|f\|_p.
$$

\end{proof}

\subsection{\it{General case $d=2$.}}

Now we are in position to extend this $\ell^p$-improving property to any quadratic polynomial.

\begin{proof}[Proof of Theorem \ref{general}]
We define $g:\Z\to\mathbb{R}$ by
$$
g(4am)=f(m) \ \text{for all} \ m\in\Z \ \text{and} \ g(n)=0 \ \text{if}\ 4a\nmid n. 
$$
We observe that since $f$ is supported in $[-(aN^2+bN+\frac{b^2}{4a}),aN^2+bN+\frac{b^2}{4a}]$ then we have that $g$ is supported in $[-(2aN+b)^2,(2aN+b)^2]$. Therefore
\begin{eqnarray*}
A^P_{N}f(x)&=&\frac{1}{N}\sum_{n\leq N}f(x+an^2+bn+c)\\
&=&\frac{1}{N}\sum_{n\leq N}g(4ax+4a^2n^2+4abn+4ac)\\
&=&\frac{1}{N}\sum_{n\leq N}g(4a(x+c)-b^2+(2an+b)^2)\\
&\leq&\frac{(2aN+b)}{N}\frac{1}{2aN+b}\sum_{k\leq 2aN+b}g(4a(x+c)-b^2+k^2)\\
&=&\left(2a+\frac{b}{N}\right)A_{2aN+b}g(4a(x+c)-b^2).
\end{eqnarray*}
Using the previous calculus, as a consequence of Theorem \ref{case $x^2$} we obtain
\begin{eqnarray*}
\|A^P_{N}f\|_{p'}&\leq&\left(2a+\frac{b}{N}\right)\|A_{2aN+b}g\|_{p'}\\
&\leq& \left(2a+\frac{b}{N}\right)(2aN+b)^{2/p'-2/p}C_p\|g\|_p\\
&\leq& \left(2a+\frac{b}{N}\right)(2aN+b)^{2/p'-2/p}C_p\|f\|_p,
\end{eqnarray*}
for every $p\in(3/2,2]$.
\end{proof}

\subsection{\it{Case $d>1$.}}
We  will adapt the idea used in the proof of Theorem 1.
\begin{proof}[Proof of Theorem \ref{case $x^d$}]
The case $p=2$ follows from Young inequality similarly to \eqref{young ineq}. Let $p\in (\widetilde p_d,2)$, we define 
$
\lambda=1-\left(\frac{d}{p}-\frac{d}{p'}\right).
$
Thus: $0<\lambda<1$ and
$
\frac{1-\lambda}{d}=\frac{1}{p}-\frac{1}{p'}.
$
Moreover, using that $p>p_d$ we see 
$
\frac{1}{p}> 1-\lambda\ \ \text{and} \ \frac{1}{p'}<\lambda.
$
As a consequence of Theorem 1 in \cite{Pi} we know that if $\lambda>\lambda_d$ (this condition is equivalent to $p>\widetilde p_d$) then there exsits a constant $C=C(p,d)$ such that
$$
\| I_{d,\lambda}h\|_{p'}\leq C\|h\|_{p},
$$
for every $h\in \l^{p}$, where 
$
\lambda_{d}:=1-\frac{1}{2^{d-1}+1}. 
$

Then we observe that
\begin{eqnarray*}
A^{d}_{N}f(x)\leq N^{\lambda-1}I_{k,\lambda}g(-x)
\end{eqnarray*}
for all $x\in\Z$, where $g$ is given by $g(y):=f(-y)$ for every $y\in\Z$. Using this we obtain
\begin{eqnarray*}
\|A^{d}_{N}f\|_{p'}\leq N^{\lambda-1}\|I_{\lambda}g\|_{p'}\leq CN^{\lambda-1}\|f\|_{p},
\end{eqnarray*}
which is the desired result.

\end{proof}

\section{Averages along primes}
For $N\in\N$ and $\lambda\in(0,1)$ we define the truncated fractional integral operator along primes to be
$$
J_{\lambda,N} f(x)=\sum_{p\leq N}\frac{f(x-p)}{p^{\lambda}}\log p.
$$
for every $f:\Z\to\mathbb{R}$, where the sum is taken over all the primes $p$ with size at most $N$.

\begin{lemma}\label{fraccionario along primes}
Let $\lambda\in(0,1)$ and $C>0$ be a constant. Assume that $1<p<\frac{1}{1-\lambda}$ and $\frac{1}{q}=\frac{1}{p}-(1-\lambda)$. Then 
\begin{equation*}
\sup_{\alpha\leq CN^{-1/q}\|f\|_p} \alpha^{q}|\{x;J_{\lambda,N} f(x)>\alpha\}|
\lesssim \|f\|^q_p .
\end{equation*}
\end{lemma}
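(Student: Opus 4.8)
The plan is to treat $J_{\lambda,N}$ as a convolution operator and to extract the restricted weak-type bound from a single crude strong estimate; the point is that the admissibility threshold $\alpha\le CN^{-1/q}\|f\|_p$ is exactly strong enough to compensate for an otherwise lossy $\ell^p\to\ell^p$ bound. Concretely, I would write $J_{\lambda,N}f=k_N*f$, where $k_N$ is the nonnegative kernel defined by $k_N(p)=p^{-\lambda}\log p$ for primes $p\le N$ and $k_N\equiv 0$ elsewhere.

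The one genuinely arithmetic ingredient is the estimate $\|k_N\|_{1}=\sum_{p\le N}p^{-\lambda}\log p\lesssim_\lambda N^{1-\lambda}$, which I would deduce from Chebyshev's bound $\sum_{p\le t}\log p\lesssim t$ together with Abel summation. It is essential to use Chebyshev here rather than the trivial count $\sum_{p\le N}\log p\le N\log N$, since the latter would cost a factor $\log N$ that fails to cancel at the end. Young's inequality then yields the strong bound $\|J_{\lambda,N}f\|_p\le\|k_N\|_1\|f\|_p\lesssim_\lambda N^{1-\lambda}\|f\|_p$, and Markov's inequality converts this into the level-set estimate $|\{x:J_{\lambda,N}f(x)>\alpha\}|\le\alpha^{-p}\|J_{\lambda,N}f\|_p^p\lesssim_\lambda\alpha^{-p}N^{(1-\lambda)p}\|f\|_p^p$, so that $\alpha^q|\{x:J_{\lambda,N}f(x)>\alpha\}|\lesssim_\lambda\alpha^{q-p}N^{(1-\lambda)p}\|f\|_p^p$.

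The decisive step is then purely algebraic. The hypotheses $1<p<\frac1{1-\lambda}$ guarantee $0<1/q<1/p$, hence $q>p$ and $q-p>0$, so on the admissible range I may insert $\alpha^{q-p}\le C^{q-p}N^{-(q-p)/q}\|f\|_p^{q-p}$. The scaling relation $\frac1q=\frac1p-(1-\lambda)$ rearranges to $(1-\lambda)p=(q-p)/q$, so the two powers of $N$ cancel identically and the right-hand side collapses to $C^{q-p}\|f\|_p^q$, with a bound uniform in $\alpha$; taking the supremum over the admissible $\alpha$ finishes the argument. Note that signs of $f$ cause no trouble, since Young's inequality applies to arbitrary $f$.

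I do not expect a serious obstacle: the estimate is elementary once the right viewpoint is adopted. The only two delicate points are (i) proving $\|k_N\|_1\lesssim_\lambda N^{1-\lambda}$ with no spurious logarithm, which forces the use of Chebyshev's bound plus partial summation rather than a naive count, and (ii) verifying that the powers of $N$ cancel exactly, which is precisely where the form of the threshold $CN^{-1/q}\|f\|_p$ and the definition of $q$ enter. One might initially suspect that a dyadic decomposition of $k_N$ into blocks $2^j\le p<2^{j+1}$ is required, as in the usual Hardy--Littlewood--Sobolev argument; the observation that shortens the proof is that the restriction on $\alpha$ makes the single Young bound already sufficient.
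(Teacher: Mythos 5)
Your proposal is correct and follows essentially the same route as the paper: write $J_{\lambda,N}$ as convolution with a nonnegative kernel, prove the $\ell^1$ kernel bound $\|k_N\|_1 \lesssim N^{1-\lambda}$, apply Young's inequality and Markov's inequality, and then use the threshold $\alpha \le C N^{-1/q}\|f\|_p$ together with the identity $(1-\lambda)p = (q-p)/q$ to cancel the powers of $N$ — this is exactly the paper's argument. The only point of divergence is the arithmetic sub-step: the paper bounds $\sum_{p\le N} p^{-\lambda}\log p$ by enumerating the primes and using the asymptotic $p_n \sim n\log n$ (so that $r_N \log r_N \sim N$), whereas you use Chebyshev's estimate $\sum_{p\le t}\log p \lesssim t$ plus Abel summation. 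Your version is marginally more elementary, since it needs only Chebyshev-strength information rather than prime-counting asymptotics of PNT strength, but both yield the same bound and the rest of the two proofs coincide step for step.
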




\begin{proof}
We assume with loss of generality that $f\geq0$. We start enumerating the primes with size at most $N$.




$$
\{p\leq N; p \ \text{prime}\}:=\{p_1,p_2,\dots,p_{r_N}\}.
$$
We recall that
$$
p_n\sim n\log n \  \text{for all}\ n,
$$
more precisely $n\log n+n\log\log n-n\leq p_n\leq n\log n+n\log\log n$.

We observe that
\begin{equation*}
\|J_{\lambda,N} f\|_p\leq \left(\sum_{n\leq r_N}\frac{\log p_n}{p_n^\lambda}\right)\|f\|_p
\end{equation*}
and
\begin{align*}
\sum_{n\leq r_N}\frac{\log p_n}{p_n^\lambda}
&\lesssim \sum_{n\leq r_N}\frac{\log n}{n^{\lambda}(\log n)^\lambda}\\
&\leq \sum_{n\leq r_N}\frac{(\log r_N)^{1-\lambda}}{n^{\lambda}}\\
&\lesssim (r_N\log r_N)^{1-\lambda}\\
&\lesssim N^{1-\lambda}.
\end{align*}
Therefore
\begin{align*}
|\{x;J_{\lambda,N} f(x)>\alpha/2\}|&\lesssim \frac{1}{\alpha^p}\|J_{\lambda,N} f\|^{p}_p\\
&\leq \frac{1}{\alpha^p}\left(\sum_{n\leq r}\frac{\log p_n}{p_n^\lambda}\right)^p\|f\|^p_p\nonumber \\
&\lesssim \frac{1}{\alpha^p}N^{p(1-\lambda)}\|f\|^p_p\\
&\lesssim \frac{1}{\alpha^p}\alpha^{-qp(1-\lambda)}\|f\|^{q}_p\\
&=\frac{1}{\alpha^q}\|f\|^{q}_p
\end{align*}
for every $\alpha\leq CN^{-1/q}\|f\|_p$.


\end{proof}

\begin{proof}[Proof of Theorem \ref{averages along primes}]
Let $p\in(1,2)$. We start observing that by H\"older inequality and the prime number theorem
$$
\A_{N}f(x)\leq \left(\frac{\log N}{N}\right)^{\frac{1}{p}}\|f\|_p\leq \frac{C}{N^{\frac{1}{p'}}}\|f\|_p.
$$
for some constant $C=C_{p}$. Moreover
$$
\A_N f(x)\leq N^{\lambda-1}J_{\lambda,N}f(x) \ \text{for all}\ \ x\in\Z.
$$
where $\lambda:=1-\left(\frac{1}{p}-\frac{1}{p'}\right)$. Then, using the previous lemma we obtain
\begin{align*}
\sup_{\alpha>0}\alpha^{p'}|\{x;\A_N f(x)>\alpha\}|&=\sup_{\alpha\leq CN^{-1/p'}\|f\|_p} \alpha^{p'}|\{x;\A_N f(x)>\alpha\}|\\
&\leq \sup_{\alpha\leq CN^{-1/p'}\|f\|_p} \alpha^{p'}|\{x;J_{\lambda,N} f(x)>\alpha/N^{\lambda-1}\}|\\
&\lesssim N^{p'(\lambda-1)}\|f\|^{p'}_p.
\end{align*}
Therefore 
\begin{equation}
\sup_{\alpha>0}\alpha^{}|\{x;\A_N f(x)>\alpha\}|^{\frac{1}{p'}}\lesssim N^{\frac{1}{p'}-\frac{1}{p}}\|f\|_p.
\end{equation}
This means that $\A_N$ is of weak type $(p,p')$ for every $p\in(1,2)$, then as a consequence of the Marcinkiewicz interpolation theorem we conclude that
$$
\|\A_N f\|_{p'}\lesssim N^{\frac{1}{p'}-\frac{1}{p}}\|f\|_p.
$$
for all $p\in(1,2)$. The case $p=p'=2$ is easier, this follows as consequence of Young inequality
\begin{equation*}
\|\A_Nf\|_{2}=\|K_N*f\|_2\leq \|K_N\|_1\|f\|_2=\|f\|_2=N^{1/2-1/2}\|f\|_2
\end{equation*}
where $K_N(x):=\frac{1}{N}\sum_{p\leq N}\delta_{-p}(x)\log p$ for every $x\in\Z$.
\end{proof}


\section{Acknowledgments}
\noindent The author is thankful to Ben Krause for introducing him to this topic. The author is also thankful to Terence Tao for helpful discussions.


\end{document}